\newtheorem{theorem}{Theorem}%[section]
\newtheorem{lemma}{Lemma}
\theoremstyle{definition}
\newtheorem{remark}{Remark}
\newtheorem{condition}{Condition}
\newcommand{\beq}{\begin{equation}}
\newcommand{\eeq}{\end{equation}}
\newcommand{\beqs}{\begin{eqnarray*}}
\newcommand{\eeqs}{\end{eqnarray*}}
\newcommand{\beqn}{\begin{eqnarray}}
\newcommand{\eeqn}{\end{eqnarray}}
\newcommand{\beqa}{\begin{array}}
\newcommand{\eeqa}{\end{array}}
\newcommand{\R}{\mathbb{R}}
\newcommand{\set}[1]{\left\{#1\right\}}
\newcommand{\uS}{\mathbb{S}^{n-1}}
\newcommand{\MA}{Monge-Amp\`ere }
\begin{document}

\title{$C^{1, 1}$ regularity for solutions to the degenerate $L_p$ Dual Minkowski problem}

\author{Li Chen}
\address{Faculty of Mathematics and Statistics, Hubei Key Laboratory of Applied Mathematics, Hubei University,  Wuhan 430062, P.R. China}
\email{chenli@hubu.edu.cn}

\author{Qiang Tu}
\address{Faculty of Mathematics and Statistics, Hubei Key Laboratory of Applied Mathematics, Hubei University,  Wuhan 430062, P.R. China}
\email{qiangtu@hubu.edu.cn}

\author{Di Wu}
\address{Faculty of Mathematics and Statistics, Hubei Key Laboratory of Applied Mathematics, Hubei University,  Wuhan 430062, P.R. China}
\email{wudi19950106@126.com}

\author{Ni Xiang}
\address{Faculty of Mathematics and Statistics, Hubei Key Laboratory of Applied Mathematics, Hubei University,  Wuhan 430062, P.R. China}
\email{nixiang@hubu.edu.cn}
\thanks{This research was supported by the National Natural Science Foundation of China No.11971157.}

%\thanks{.}

\date{}

\begin{abstract}
In this paper, we study $C^{1, 1}$ regularity for solutions to the
degenerate $L_p$ Dual Minkowski problem. Our proof is motivated by
the idea of Guan and Li's work on $C^{1,1}$ estimates for solutions
to the Aleksandrov problem.
\end{abstract}

\keywords{$C^{1, 1}$ regularity,
  Monge-Amp\`ere equation, The $L_p$ Dual Minkowski problem}

\subjclass[2010]{
35J96, 52A20, 53C44.
}

\maketitle
\vskip4ex

%%%%%%%%%%%%%%%%%%%%%%%%%%%%%%%%%%%
\section{Introduction}

The classic Minkowski problem is one of the cornerstones in the
Brunn-Minkowski theory of convex bodies. Its solution has many
applications in various fields of geometry and analysis, see
\cite{Sch13} for an overview. An important counterpart of the
classic Minkowski problem is the famous Aleksandrov problem
characterzing the integral Gauss curvature, which is introduced and
completely solved by Aleksandrov \cite{Al}.

Oliker \cite{Ol83} has shown that there is a PDE associated with the
Aleksandrov problem:
\begin{equation} \label{Al}
\frac{h}{(|\nabla h|^2+h^2)^{\frac{n}{2}}}\det(\nabla^2h +hI) =f
\text{ on } \uS,
\end{equation}
where $f$ is a given positive function  defined on the unit sphere
$\uS$, $h$ is an unknown function on $\uS$. Here $\nabla$ is the
covariant derivative with respect to an orthonormal frame on $\uS$,
$I$ is the unit matrix of order $n-1$. When $f$ is a smooth positive
function, the solution to the Aleksandrov problem \eqref{Al} is
smooth, see \cite{Po73} and \cite{Ol83}. When $f$ is a smooth, but
only nonnegative, Guan and Li \cite{Guan-Li} have shown solutions to the
Aleksandrov problem \eqref{Al} are at least $C^{1, 1}$ in dimension
$n=3, 4$. For higher dimensions, they have obtain the same conclusion under
some further hypothesis on $f$.

Recently, the $L_p$ dual Minkowski problem was  introduced
in \cite{LYZ.Adv.329-2018.85}. For a given positive function $f$
defined on the unit sphere $\uS$, the \emph{$L_p$ dual Minkowski
problem} is concerned with the solvability of the \MA type equation
\begin{equation} \label{Lp-A}
\frac{h^{1-p}}{(|\nabla h|^2+h^2)^{\frac{n-q}{2}}}\det(\nabla^2h
+hI) =f \text{ on } \uS,
\end{equation}
for some support function $h$ of a hypersurface $M$ in the Euclidean
space $\R^n$ enclosing the origin. The $L_p$ dual Minkowski problem
unifies the Aleksandrov problem ($p=0, q=0$), the dual Minkowski
problem ($p=0$) and the $L_p$-Minkowski problem ($q=n$).  The dual
Minkowski problem was first proposed by Huang, Lutwak, Yang and
Zhang in their recent groundbreaking work
\cite{HLYZ.Acta.216-2016.325} and then followed by
\cite{BHP.JDG.109-2018.411,
  %CL.Adv.333-2018.87,
  HP.Adv.323-2018.114,
  HJ.JFA.277-2019.2209,
  %JW.JDE.263-2017.3230,
  LSW.JEMSJ.22-2020.893,
  Zha.CVPDE.56-2017.18,
  Zha.JDG.110-2018.543}.
The $L_p$-Minkowski problem was introduced by Lutwak
\cite{Lut.JDG.38-1993.131} in 1993 and has been extensively studied
since then; see e.g.
\cite{%BHZ.IMRNI.2016.1807,
  BLYZ.JAMS.26-2013.831,
  CLZ.TAMS.371-2019.2623,
  Zhu.Adv.262-2014.909}
for the logarithmic Minkowski problem ($p=0$),
\cite{JLW.JFA.274-2018.826,
  JLZ.CVPDE.55-2016.41,
  Lu.SCM.61-2018.511,
  Lu.JDE.266-2019.4394,
  LW.JDE.254-2013.983,
  Zhu.JDG.101-2015.159}
for the centroaffine Minkowski problem ($p=-n$), and
\cite{CW.Adv.205-2006.33,
  HLYZ.DCG.33-2005.699,
  % LYZ.JDG.56-2000.111,
  LYZ.TAMS.356-2004.4359,
  Sta.Adv.167-2002.160}
for other cases of the $L_p$-Minkowski problem. For the general
$L_p$ dual Minkowski problem, much progress has already been made in
\cite{BF.JDE.266-2019.7980,
  CHZ.MA.373-2019.953,
  CCL,
  % CL,
  HLYZ.JDG.110-2018.1,
  HZ.Adv.332-2018.57,
  LLL}.

When $f$ is a smooth positive function, the solution to the $L_p$
dual Minkowski problem \eqref{Lp-A} is smooth provided either $p>q$
or $pq\geq 0$ and $f$ is even (see \cite{HZ.Adv.332-2018.57} and
\cite{CHZ.MA.373-2019.953}). It is natural to ask that when $f$ is
smooth, but only nonnegative, are the solutions to the $L_p$ dual
Minkowski problem \eqref{Lp-A} smooth? In this case, we encounter
with certain degenerate \MA type equation. Regularity of solutions
to degenerate \MA type equations has been investigated in \cite{An,
Kr89, Kr90, Kr95, Ca86, Guan97, Guan99, Hong94, Tr91, Tr83, Tr84,
Le15, Li15} and the references therein. The global $C^{1,1}$
regularity of degenerate \MA type equations has been obtained in
\cite{Guan99} and one cannot expect regularity higher than $C^{1,1}$
in general \cite{Wang95}.

In this paper, we study $C^{1, 1}$ regularity for solutions to the
$L_p$ Dual Minkowski problem \eqref{Lp-A}  when $f$ is smooth
enough, but only nonnegative. For low dimensions $n=3, 4$, we can obtain the following result.
\begin{theorem}\label{thm}
Suppose $p>q>0$ and $n=3$ or $n=4$. Let $f$ be a smooth, nonnegative,
nonzero and even function on
$\uS$. Then, there exists a generalized solution $h \in C^{1, 1}(\uS)$ satisfying the
equation \eqref{Lp-A}.
\end{theorem}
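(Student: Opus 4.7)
The plan is a three-step approximation scheme in the spirit of Guan--Li's treatment of the degenerate Aleksandrov problem. First, replace $f$ by the strictly positive approximation $f_\epsilon = f + \epsilon$ for small $\epsilon > 0$. Since $p > q > 0$ and $f_\epsilon$ is smooth, positive and even, the existence and regularity theory for the non-degenerate $L_p$ dual Minkowski problem \cite{HZ.Adv.332-2018.57,CHZ.MA.373-2019.953} yields a smooth, positive, even support function $h_\epsilon$ solving \eqref{Lp-A} with right-hand side $f_\epsilon$. One then derives uniform $C^0$ bounds $c \le h_\epsilon \le C$ independent of $\epsilon$: testing \eqref{Lp-A} against suitable powers of $h_\epsilon$ in the range $p > q > 0$ yields the upper bound, while a standard argument exploiting evenness (any $h_{\epsilon_k}(\xi_0) \to 0$ forces, via evenness and convexity, the corresponding convex body to collapse to a great hyperplane, contradicting $\int_{\uS} f > 0$) produces the positive lower bound. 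A uniform $C^1$ bound follows automatically from the convexity of $h_\epsilon$ and the $C^0$ bounds.

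The crux is a uniform $C^{1,1}$ estimate, which after taking logarithms in \eqref{Lp-A} reduces to controlling the largest eigenvalue of $\nabla^2 h_\epsilon + h_\epsilon I$ from the linearized equation
\[
  \log\det(\nabla^2 h_\epsilon + h_\epsilon I) = \log f_\epsilon - (1-p)\log h_\epsilon + \tfrac{n-q}{2}\log(|\nabla h_\epsilon|^2 + h_\epsilon^2).
\]
Following \cite{Guan-Li}, I would apply the maximum principle to an auxiliary function of Pogorelov type,
\[
  W(\xi, \tau) = \log\!\big((\nabla^2 h_\epsilon + h_\epsilon I)(\tau, \tau)\big) + \varphi(h_\epsilon) + \psi\!\left(\tfrac{1}{2}|\nabla h_\epsilon|^2\right),
\]
defined on $\uS \times \uS$. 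Differentiating twice at a maximum point produces third-order error terms; the concavity of $\log\det$ contributes quadratic good terms in the third derivatives whose coefficients dominate the errors precisely when the dimension $n-1$ of $\uS$ is at most three, which is exactly $n = 3$ or $4$. The functions $\varphi$ and $\psi$ must be tuned so as to absorb the extra contributions from the $h^{1-p}$ and $(|\nabla h|^2 + h^2)^{(n-q)/2}$ factors that are absent from the Aleksandrov equation; the conditions $p > q > 0$ fix the signs of these contributions in the favorable direction.

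Once the uniform $C^{1,1}$ bound is established, a subsequence $h_{\epsilon_k}$ converges in $C^{1,\alpha}(\uS)$ for every $\alpha \in (0,1)$ to a limit $h \in C^{1,1}(\uS)$. Passing to the limit in the Aleksandrov/weak formulation of \eqref{Lp-A} produces a generalized solution; the uniform positive lower bound on $h_{\epsilon_k}$ is what allows the singular factors $h^{1-p}$ and $(|\nabla h|^2 + h^2)^{(n-q)/2}$ to survive the limit even though $f$ may vanish. The main obstacle is the $C^{1,1}$ estimate itself: the counterexample of \cite{Wang95} shows that one cannot hope for better than $C^{1,1}$, and the dimensional restriction $n \le 4$ is exactly the threshold at which the Guan--Li maximum principle calculation closes. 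The technical novelty relative to \cite{Guan-Li} lies in designing $\varphi$ and $\psi$ so that the new lower-order terms produced by the $L_p$ dual structure are controlled while the dimensional count for the third-order terms remains intact.
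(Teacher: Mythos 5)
Your skeleton (set $f_\epsilon=f+\epsilon$, solve the non-degenerate problem via \cite{HZ.Adv.332-2018.57}, prove estimates uniform in $\epsilon$, pass to the limit) is the same as the paper's, and your $C^0$/$C^1$ discussion is acceptable even though arranged differently (the paper gets the lower bound in one line at the minimum point of $h$ from $p>q$, and uses evenness for the \emph{upper} bound). The genuine gap is in the only hard step, the uniform $C^2$ estimate. You propose to take logarithms of \eqref{Lp-A} and run a Pogorelov-type maximum principle with tunable $\varphi,\psi$, attributing the restriction $n=3,4$ to ``third-order terms closing only when $\dim\uS\le 3$.'' That mechanism does not exist and, more to the point, it ignores the actual difficulty: the estimate must be independent of $\min f_\epsilon=\epsilon$. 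After taking $\log f_\epsilon$, the differentiated equation produces terms of the type $|\nabla f_\epsilon|^2/f_\epsilon^2$ and $\Delta f_\epsilon/f_\epsilon$, which blow up as $\epsilon\to 0$ at the zeros of $f$; no choice of $\varphi(h)$ and $\psi(\tfrac12|\nabla h|^2)$ can absorb them, because they involve $f$, not $h$. The conditions $p>q>0$ control only the $\psi$-part coming from $h^{1-p}$ and $(|\nabla h|^2+h^2)^{(n-q)/2}$, not the degeneracy of $f$.

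What actually makes the argument work (and where the dimension restriction really comes from) is the following, entirely absent from your proposal. One differentiates the equation in the concave form $\det^{1/(n-2)}(\nabla^2h+hI)=(\psi f)^{1/(n-2)}$ at a maximum point of $H=\Delta h+(n-1)h$; concavity of $\det^{1/(n-2)}$ (via the elementary inequality \eqref{Alg}) reduces everything to a lower bound for $\varphi\Delta\varphi-\frac{n-3}{n-2}|\nabla\varphi|^2$ with $\varphi=\psi f$, and after a Cauchy--Schwarz absorption of the mixed term into the good $\psi$-terms one needs precisely $f\Delta f-\frac{3-q}{2-q}|\nabla f|^2\ge -Af^{2-\frac1{n-2}}$ (Condition \ref{CII}) or the bounds on $f^{1/(n-2)}$ of Condition \ref{CI}. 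The theorem for $n=3,4$ is then a corollary of the general result (Theorem \ref{thm-1}) because, by Guan--Li \cite{Guan-Li}, \emph{every} nonnegative $C^{1,1}$ function on $\mathbb{S}^2$ and every nonnegative $C^{3,1}$ function on $\mathbb{S}^3$ automatically satisfies Condition \ref{CI} (here $\frac1{n-2}=1$ or $\frac12$, i.e.\ one is using the classical facts about derivatives of $f$ and of $\sqrt f$ for nonnegative functions); one also needs that $f_\epsilon=f+\epsilon$ inherits the condition with a uniform constant, which is Lemma \ref{GL-1}. Without identifying this structural condition on $f^{1/(n-2)}$ and its automatic validity in low dimensions, your maximum-principle step cannot be closed uniformly in $\epsilon$, so the proposal as written does not prove the theorem.
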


For higher dimensions, we need some additional hypothesis.
To statement our result we recall the
following Condition \ref{CI} which was introduced by Guan-Li in
\cite{Guan-Li}.

\begin{condition}\label{CI}
$f\in C^{2}(\uS)$ is nonnegative and there exist a constant $A$
such that
\begin{eqnarray}\label{I-1}
|\nabla (f^{\frac{1}{n-2}})|\leq A \quad \text{on} \ \uS,
\end{eqnarray}
and
\begin{eqnarray}\label{I-2}
\Delta (f^{\frac{1}{n-2}})\geq -A  \quad \text{on} \ \uS.
\end{eqnarray}
\end{condition}

It is clearly for nonnegative function $f\in C^{2}(\uS)$, the
condition \eqref{I-1} is equivalent to
\begin{eqnarray*}
|\nabla f(x)|\leq(n-2)A f^{1-\frac{1}{n-2}}(x),
\end{eqnarray*}
and  the condition
\eqref{I-2} is equivalent to
\begin{eqnarray*}
f(x)\Delta f(x)-\frac{n-2}{n-3}|\nabla f(x)|^2\geq-(n-2)A
f^{2-\frac{1}{n-2}}(x).
\end{eqnarray*}

We also introduce the following Condition \ref{CII}.

\begin{condition}\label{CII}
$f\in C^{2}(\uS)$ is nonnegative, $q<2$ and there exist some
constants $A$ such that
\begin{eqnarray}\label{II}
f \Delta f-\frac{3-q}{2-q}|\nabla f|^2
\geq-Af^{2-\frac{1}{n-2}} \quad \text{on} \ \uS.
\end{eqnarray}
\end{condition}

When $q=0$, Condition \ref{CII} was introduced  by Guan-Li in
\cite{Guan-Li}. Clearly, if $f$ satisfies Condition \ref{CII}, then
$f$ satisfies the condition \eqref{I-2}.

Using the idea in \cite{Guan-Li}, we can prove the following
theorem.

\begin{theorem}\label{thm-1}
Suppose $p>q>0$, $f$ is a nonnegative, nonzero and even function on
$\uS$. Then, there exists a generalized solution $h \in C^{1, 1}(\uS)$ satisfying the
equation \eqref{Lp-A}, provided either $f \in C^{2, \alpha}(\uS)\
(0<\alpha<1)$ satisfies Condition \ref{CI} or Condition \ref{CII}.
\end{theorem}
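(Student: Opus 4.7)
The approach I would take follows the approximation scheme of \cite{Guan-Li} for the Aleksandrov problem. Replace the right-hand side by $f_\varepsilon := f + \varepsilon$ for small $\varepsilon > 0$; this is smooth, even, and strictly positive. Since $p > q > 0$, the existence and regularity theory for the non-degenerate $L_p$ dual Minkowski problem with even data (see \cite{CHZ.MA.373-2019.953, HZ.Adv.332-2018.57}) produces a smooth, even, strictly positive support function $h_\varepsilon$ solving
\beq
\det(\nabla^2 h_\varepsilon + h_\varepsilon I) = h_\varepsilon^{p-1}\bigl(|\nabla h_\varepsilon|^2+h_\varepsilon^2\bigr)^{(n-q)/2} f_\varepsilon \quad \text{on } \uS.
\eeq
The entire argument then consists in deriving $\varepsilon$-independent $C^0$, $C^1$ and $C^{1,1}$ bounds, after which the Arzel\`a-Ascoli theorem together with weak continuity of the Monge-Amp\`ere measure under uniform convergence of support functions yields a generalized solution $h \in C^{1,1}(\uS)$ of \eqref{Lp-A}.

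For the $C^0$ bounds, at a minimum point of $h_\varepsilon$ one has $\nabla h_\varepsilon = 0$ and $\nabla^2 h_\varepsilon \geq 0$, so $\det(\nabla^2 h_\varepsilon + h_\varepsilon I) \geq h_\varepsilon^{n-1}$; inserted into the equation this forces $\min h_\varepsilon \geq (\|f\|_{\infty}+\varepsilon)^{-1/(p-q)}$, a uniform positive lower bound since $p > q$. A uniform upper bound should follow from the central symmetry of $h_\varepsilon$ together with an integration of the equation against $h_\varepsilon$, using $f \not\equiv 0$. Because $|\nabla h_\varepsilon|^2 + h_\varepsilon^2$ is the squared distance from the origin to the corresponding boundary point, the $C^0$ bounds immediately produce a uniform $C^1$ bound.

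The central step---and the essential difficulty---is an $\varepsilon$-independent bound on $\lambda_{\max}(W[h_\varepsilon])$, where $W[h] := \nabla^2 h + h I$. Following the philosophy of \cite{Guan-Li}, I would work with an auxiliary function of the form
\beq
G(x, \xi) = \log W_{\xi\xi}[h_\varepsilon](x) + \varphi(h_\varepsilon, |\nabla h_\varepsilon|^2) - \gamma\, \psi(f_\varepsilon)(x),
\eeq
with $\xi$ ranging over unit tangent vectors, $\varphi$ a lower-order barrier, $\gamma > 0$ a large constant, and $\psi(f) = f^{1/(n-2)}$ under Condition \ref{CI} (while Condition \ref{CII}, which requires $q < 2$, calls for a $\psi$ tuned to the exponent $(3-q)/(2-q)$ appearing in \eqref{II}). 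At an interior maximum of $G$, diagonalizing $W[h_\varepsilon]$ and differentiating the logarithm of the equation twice in the eigendirection realizing $\lambda_{\max}$ produces an inequality whose dangerous terms involve first and second derivatives of $f_\varepsilon$ in precisely the combinations controlled by \eqref{I-1}-\eqref{I-2} and \eqref{II}. These error terms can then be absorbed into the good second-order contributions coming from $\varphi$ and the ellipticity of $\log\det$ on $W$, giving the desired upper bound on $\lambda_{\max}(W[h_\varepsilon])$.

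The principal obstacle is pinning down the right $G$: the choice of $\psi(f)$ must simultaneously match the power $1/(n-2)$ dictated by the $(n-1)$-dimensional Monge-Amp\`ere structure, accommodate the extra $L_p$ dual weights $h^{1-p}\bigl(|\nabla h|^2 + h^2\bigr)^{-(n-q)/2}$ which distinguish \eqref{Lp-A} from the Aleksandrov case $p=q=0$ treated in \cite{Guan-Li}, and close the maximum-principle inequality against the bounds \eqref{I-1}-\eqref{I-2} or \eqref{II}. The hypothesis $p > q > 0$ enters through favorable signs of the coefficients of $h_\varepsilon$ and $|\nabla h_\varepsilon|^2$ in the linearized operator, which is why no structural hypothesis on $f$ beyond those imported from \cite{Guan-Li} is needed.
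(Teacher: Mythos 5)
Your overall scheme is exactly the one the paper follows: approximate by $f_\epsilon=f+\epsilon$, invoke the smooth even existence theory of \cite{HZ.Adv.332-2018.57} for $p>q$, derive $\epsilon$-independent $C^0$, $C^1$, $C^2$ bounds, and pass to the limit; your $C^0$/$C^1$ outline (minimum-point argument, evenness plus integrating the equation, $\rho^2=|\nabla h|^2+h^2$) matches Lemmas \ref{C0-1} and \ref{C0-2}. But the heart of the theorem, the $\epsilon$-independent second-order estimate, is left open in your proposal, and the specific device you suggest would not close. Conditions \ref{CI} and \ref{CII} control quantities built from $f^{\frac{1}{n-2}}$ (equivalently $f\Delta f-c|\nabla f|^2\geq -Af^{2-\frac{1}{n-2}}$); they give no uniform control of $\nabla\log f_\epsilon$ or $\Delta\log f_\epsilon$, which blow up as $\epsilon\to 0$ on the zero set of $f$. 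So an auxiliary function of the form $\log W_{\xi\xi}+\varphi-\gamma\,\psi(f_\epsilon)$, with the second derivatives of the equation taken in logarithmic form, produces degenerate error terms that cannot be absorbed using only \eqref{I-1}--\eqref{I-2} or \eqref{II}; you yourself flag that you cannot pin down $G$, which is precisely the missing idea.

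What the paper does instead (following \cite{Guan-Li}) is structurally different and is what makes the degeneracy harmless: apply the maximum principle to the trace $H=\Delta h+(n-1)h$ at its maximum point, rewrite the equation as $\det^{\frac{1}{n-2}}(\nabla^2h+hI)=\varphi^{\frac{1}{n-2}}$ with $\varphi=h^{p-1}(|\nabla h|^2+h^2)^{\frac{n-q}{2}}f$, and use $\nabla H=0$ together with the elementary inequality $\sum_i a_i^2\geq\frac{1}{n-2}\bigl(\sum_i a_i\bigr)^2$ to reduce the differentiated equation to the single quantity $\varphi^{-2}\bigl[\varphi\Delta\varphi-\frac{n-3}{n-2}|\nabla\varphi|^2\bigr]$. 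Splitting $\varphi=\psi f$ with $\psi=h^{p-1}(|\nabla h|^2+h^2)^{\frac{n-q}{2}}$ and absorbing the cross term by Cauchy--Schwarz (this is where $q<2$ enters for Condition \ref{CII}, and where \eqref{I-1} is used for Condition \ref{CI}), one arrives at $0\geq -Af^{-\frac{1}{n-2}}-C+H\sum_i b^{ii}-C(h,h^{-1},\nabla h)H$. The degenerate factor $f^{-\frac{1}{n-2}}$ is then cancelled exactly: since $\sum_i b^{ii}\geq C(n)H^{\frac{1}{n-2}}(\det b)^{-\frac{1}{n-2}}$ and $\det b=\psi f$, multiplying through by $f^{\frac{1}{n-2}}$ leaves $H^{1+\frac{1}{n-2}}\leq C(1+H)$ with constants depending only on $\max f$, $A$ and the $C^1$ data --- independent of $\min f_\epsilon$. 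This cancellation, tied to the power $\frac{1}{n-2}$, is the key mechanism your sketch does not supply. A second, smaller gap: you never verify that $f_\epsilon=f+\epsilon$ satisfies Condition \ref{CI} or \ref{CII} with a constant uniform in $\epsilon$; the paper handles this by the additivity lemma of Guan--Li (Lemma \ref{GL-1}), applied to $f$ and the constant $\epsilon$.
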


\begin{remark}
In \cite{Guan-Li}, Guan-Li showed that for $n=3$, all nonnegative
function $f \in C^{1, 1}(\mathbb{S}^2)$ satisfied Condition
\ref{CI} and all nonnegative function $f \in C^{3, 1}(\mathbb{S}^3)$
satisfied Condition \ref{CI} for $n=4$. Thus, Theorem \ref{thm} is just
a direct corollary of Theorem \ref{thm-1}.
\end{remark}

\begin{remark}
The condition that $f$ is nonzero in Theorem \ref{thm-1} is
necessary, otherwise we can see that $h\equiv +\infty$ by Lemma
\ref{C0-1}. When $f$ is a smooth positive function, the existence of
smooth solutions to the $L_p$ dual Minkowski problem \eqref{Lp-A} is
attained in \cite{HZ.Adv.332-2018.57} provided $p>q$, thus it is a
natural question to ask whether we can drop the assumptions that
$q>0$ and $f$ is an even function in Theorem \ref{thm-1}.
\end{remark}

The organization of the paper is as follows. $C^0$, $C^1$ and $C^2$
estimates are given in Sect. 2. In Sect. 3 we prove Theorem
\ref{thm-1}.

\section{A priori estimates}

In this section, we will establish $C^0$, $C^1$ and $C^2$ estimates
for solutions to \eqref{Lp-A}. The key is that those estimates must
be independent of $\min_{\uS} f$.

\subsection{Basic properties of convex hypersurfaces}

We first recall some basic properties of convex hypersurfaces in
$\R^n$; see \cite{Urb.JDG.33-1991.91} for details. Let $M$ be a
smooth, closed, uniformly convex hypersurface in $\R^n$ enclosing
the origin. The support function $h$ of $M$ is defined as
\begin{equation} \label{h0}
h(x) := \max_{y\in M} \langle y,x \rangle, \quad \forall\, x\in\uS,
\end{equation}
where $\langle \cdot,\cdot \rangle$ is the standard inner product in
$\R^n$.

The convex hypersurface $M$ can be recovered by its support function
$h$. In fact, writing the Gauss map of $M$ as $\nu_M$, we
parametrize $M$ by $X : \uS\to M$ which is given as
\begin{equation*}
X(x) =\nu_M^{-1}(x), \quad \forall\,x\in\uS.
\end{equation*}
Note that $x$ is the unit outer normal vector of $M$ at $X(x)$. On
the other hand, one can easily check that the maximum in the
definition \eqref{h0} is attained at $y=\nu_M^{-1}(x)$, namely
\begin{equation} \label{h}
  h(x) = \langle x, X(x)\rangle, \quad\forall\, x \in \mathbb{S}^{n-1}.
\end{equation}
Let $\nabla$ be the corresponding connection on $\mathbb{S}^{n-1}$.
Differentiating the both sides of \eqref{h}, we have
\begin{equation*}
  \nabla_{i} h = \langle \nabla_{i}x, X(x)\rangle + \langle x, \nabla_{i}X(x)\rangle.
\end{equation*}
Since $\nabla_{i}X(x)$ is tangent to $M$ at $X(x)$, there is
\begin{equation*}
  \nabla_{i} h = \langle \nabla_{i}x, X(x)\rangle,
\end{equation*}
which together with \eqref{h} implies that
\begin{equation*}\label{Xh}
X(x) = \nabla h(x) + h(x)x, \quad \forall\,x\in\uS.
\end{equation*}
The radial function $\rho$ of the convex hypersurface $M$ is defined
as
\begin{equation*}
\rho(u) :=\max\set{\lambda>0 : \lambda u\in M}, \quad\forall\,
u\in\uS.
\end{equation*}
Note that $\rho(u)u\in M$. If we connect $u$ and $x$ through the
following equality:
\begin{equation}\label{eq:8}
\rho(u)u =X(x) =\nabla h(x) +h(x)x.
\end{equation}
By virtue of \eqref{eq:8}, there is
\begin{equation*}
\rho^2=|\nabla h|^2+h^2,
\end{equation*}
which implies that
\begin{equation}\label{G-1}
|\nabla h|\leq \rho.
\end{equation}
By \eqref{h} and \eqref{eq:8} , we have
\begin{equation}\label{G-2}
\max_{\uS}h=\max_{\uS}\rho.
\end{equation}

\subsection{$C^0$ estimate and the gradient estimate}

Now, we begin to prove $C^0$ estimate.

\begin{lemma}\label{C0-1}
Assume $p>q$ and $f$ is a nonnegative and continuous function. Let $h
\in C^{2}(\uS)$ be a solution to \eqref{Lp-A}, then there exists a
positive constant $C$ depending on $p$, $q$ and $\max_{\uS} f$ such
that
\begin{eqnarray*}
\min_{\uS}h\geq C.
\end{eqnarray*}
\end{lemma}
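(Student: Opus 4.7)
The plan is to apply the maximum principle to $h$ at its minimum point and read the bound directly off the structure of equation \eqref{Lp-A}. Let $x_1 \in \uS$ be a point where $h$ attains its minimum. At such a point one has $\nabla h(x_1)=0$ and $\nabla^2 h(x_1)\geq 0$ in the matrix sense, from which $\nabla^2 h(x_1)+h(x_1)I \geq h(x_1)I$ and hence $\det(\nabla^2 h+hI)(x_1)\geq h(x_1)^{n-1}$. Moreover, the gradient-dependent factor collapses, since $|\nabla h(x_1)|^2+h(x_1)^2=h(x_1)^2$.

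Substituting these two observations into \eqref{Lp-A} evaluated at $x_1$ yields, after a routine bookkeeping of exponents ($1-p+(n-1)-(n-q)=q-p$),
\[
f(x_1)\;\geq\; h(x_1)^{q-p}.
\]
The hypothesis $p>q$ makes the exponent $q-p$ negative, so rearranging and using $f(x_1)\leq \max_{\uS} f$ gives
\[
\min_{\uS} h \;=\; h(x_1)\;\geq\; f(x_1)^{-1/(p-q)}\;\geq\;(\max_{\uS} f)^{-1/(p-q)},
\]
which is the desired lower bound depending only on $p$, $q$, and $\max_{\uS} f$.

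There is essentially no obstacle here: the lemma is a one-line maximum principle. The role of the hypotheses is transparent. The assumption $p>q$ is exactly what provides the correct sign of the exponent to yield a lower rather than an upper bound on $h$; the continuity of $f$ on the compact sphere guarantees $\max_{\uS} f$ is finite; and the mere nonnegativity of $f$ (as opposed to positivity) causes no trouble in the argument. The one degenerate possibility is $f\equiv 0$, in which case the bound becomes vacuous, consistent with the observation in the remarks that one then necessarily has $h\equiv +\infty$.
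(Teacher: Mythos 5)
Your proof is correct and is essentially the paper's own argument: evaluate \eqref{Lp-A} at a minimum point of $h$, where $\nabla h=0$ and $\nabla^2 h\geq 0$ give $\det(\nabla^2h+hI)\geq h^{n-1}$ and the gradient factor collapses, yielding $h^{q-p}\leq \max_{\uS} f$ and hence the lower bound via $p>q$. Your version even states the exponent in the final bound, $(\max_{\uS} f)^{-1/(p-q)}$, more carefully than the paper does.
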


\begin{proof}
Assume $h$ attains its minimum at $x_0$, using the equation
\eqref{Lp-A}, it is straightforward to see that
\begin{eqnarray*}
h^{q-p}(x_0)\leq \max_{  \uS} f.
\end{eqnarray*}
 Note that $p>q$, thus
\begin{eqnarray*}
h(x_0)\geq \frac{1}{[\max_{\uS} f]^{p-q}}.
\end{eqnarray*}
So, we complete the proof.
\end{proof}

\begin{lemma}\label{C0-2}
Assume $p>q>0$ and $f$ is a continuous, even and nonzero function.
Let $h \in C^{2}(\uS)$ be a solution to \eqref{Lp-A}, then there
exists a positive constant $C$ depending on $p$, $q$, $n$ and
$\max_{\uS} f$ such that
\begin{eqnarray*}
\max_{\uS}h\leq C.
\end{eqnarray*}
Thus,
\begin{eqnarray*}\label{Ge}
\max_{\uS}|\nabla h|\leq C,
\end{eqnarray*}
where $C$ is a positive constant depending on $p$, $q$, $n$ and
$\max_{\uS} f$.
\end{lemma}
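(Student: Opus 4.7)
The plan is to integrate the equation \eqref{Lp-A} against $h$ and to extract an upper bound for $R := \max_\uS h$ by comparing with the volume of the associated convex body $K$. Let $x_0 \in \uS$ be a point where $h$ attains its maximum $R$. Since $\nabla h(x_0) = 0$, the parametrization $X(x) = \nabla h(x) + h(x)x$ gives $X(x_0) = R x_0 \in K$, so for every $x \in \uS$ one has
\begin{equation*}
h(x) = \max_{y \in K} \langle y, x \rangle \;\geq\; \langle R x_0, x \rangle \;\geq\; R \, \langle x, x_0 \rangle_+,
\end{equation*}
where $\langle x, x_0 \rangle_+$ denotes the positive part. This one-sided lower bound is all one can extract from the definition of the maximum.

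Next, I multiply \eqref{Lp-A} by $h$ and integrate against $\dd\sigma$. Using the classical identity $nV(K) = \int_\uS h \det(\nabla^2 h + hI) \dd\sigma$ on the left, I obtain
\begin{equation*}
n V(K) = \int_\uS f \, h^p \, \rho^{n-q} \dd\sigma.
\end{equation*}
Since $h \le R$ forces $K \subset B_R$, the left side is at most $n \omega_n R^n$. On the right I use $\rho^2 = h^2 + |\nabla h|^2 \ge h^2$, hence $\rho^{n-q} \ge h^{n-q}$ (assuming $q<n$; the case $q \geq n$ is handled by instead writing $\rho^{n-q}\geq R^{n-q}$ from $\rho\leq R$), together with the pointwise bound $h \ge R \langle \cdot, x_0\rangle_+$ and $p>0$, to arrive at
\begin{equation*}
n \omega_n R^n \;\geq\; R^{\,p+n-q} \int_\uS f(x) \, \langle x, x_0 \rangle_+^{\,p+n-q} \dd\sigma.
\end{equation*}

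To close the estimate I must bound the last integral from below uniformly in $x_0$, and this is where the evenness of $f$ becomes essential. The substitution $x \mapsto -x$ together with $f(-x)=f(x)$ yields
\begin{equation*}
\int_\uS f(x)\, \langle x, x_0\rangle_+^\alpha \dd\sigma \;=\; \tfrac{1}{2}\int_\uS f(x)\, |\langle x, x_0\rangle|^\alpha \dd\sigma, \qquad \alpha := p+n-q.
\end{equation*}
The right-hand side is a continuous function of $x_0$ on the compact set $\uS$, and strictly positive at every $x_0$ (since $f \not\equiv 0$, while $|\langle x, x_0\rangle|^\alpha$ vanishes only on a set of measure zero), hence is bounded below by some $c^* > 0$. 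Combined with the previous display, this gives $R^{p-q} \le 2n\omega_n / c^*$, and since $p > q$ I obtain the desired upper bound on $R$. The gradient estimate is then immediate from \eqref{G-1} and \eqref{G-2}: $|\nabla h| \le \rho \le \max_\uS \rho = \max_\uS h \le C$.

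The main obstacle is the uniform positivity of $c^*$. Without the evenness assumption on $f$, one could imagine the support of $f$ lying entirely in the hemisphere $\{\langle x, x_0\rangle \leq 0\}$ opposite to where the one-sided bound $h \geq R\langle x, x_0\rangle_+$ gives any information, in which case the integral would vanish and the argument collapses. Thus evenness is being used precisely to guarantee that $f$ interacts nontrivially with \emph{both} hemispheres determined by $x_0$, for every possible location $x_0$ of the maximum.
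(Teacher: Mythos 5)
Your proof is correct, and it reaches the bound by a somewhat different route than the paper. The paper integrates $h^pf$ and uses the radial change of variables to get $\int_{\uS}h^p f\,dx=\int_{\uS}\rho^q\,du\le|\uS|\,h_{max}^q$, whereas you multiply \eqref{Lp-A} by $h$ and invoke the volume identity $nV(K)=\int_{\uS}h\det(\nabla^2h+hI)\,dx\le n\omega_nR^n$, compensating with the pointwise bound $\rho^{n-q}\ge h^{n-q}$ (or $\rho^{n-q}\ge R^{n-q}$ when $q\ge n$, a case you correctly flag); both routes exploit $p>q$ in the same way and end with $R^{p-q}\le C$. Two of your variations are in fact a bit more robust than the paper's: you use evenness of $f$ directly, symmetrizing $\langle x,x_0\rangle_+^{\alpha}$ into $\tfrac12|\langle x,x_0\rangle|^{\alpha}$, whereas the paper passes through the assertion ``$f$ even, so is $h$'', which presumes the solution itself inherits the symmetry of $f$; and you obtain the uniform lower bound $c^*$ by continuity in $x_0$ plus compactness, whereas the paper localizes on a ball around a maximum point of $f$. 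Do note that in both arguments the final constant really depends on $f$ beyond $\max_{\uS}f$ (through $c^*$, respectively through the radius $\delta$ of the ball where $f\ge\tfrac12\max f$); this is harmless for the intended application to $f_\epsilon=f+\epsilon$, since $f_\epsilon\ge f$ makes the lower bound uniform in $\epsilon$.
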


\begin{proof}
Write
\begin{equation*}
h_{max}=\max_{x\in\uS} h(x)=h(x_0)
\end{equation*}
for some $x_0\in\uS$. Note that $f$ is even, so is $h$. Thus, we have
by the definition of support function that
\begin{equation*}
h(x)\geq h_{max}|\langle x,x_0 \rangle|, \quad \forall x\in\uS.
\end{equation*}
Thus,
\begin{eqnarray}\label{0-1}
\int_{\uS}h^p(x) f(x)dx\geq h_{max}^{p}\int_{\uS}|\langle x,x_0
\rangle|^pf(x)dx.
\end{eqnarray}
Since $f$ is nonzero, $\max_{\uS}f>0$. Assume $f$ attains its
maximum at $y$, there exist a ball $B(y, \delta)\subset \uS$ such
that
\begin{eqnarray}\label{0-2}
f(x)\geq \frac{1}{2}\max_{\uS}f, \quad \forall x \in B(y, \delta).
\end{eqnarray}
Substituting \eqref{0-2} into \eqref{0-1}, we have
\begin{eqnarray}\label{0-3}
\int_{\uS}h^p(x) f(x)dx&\geq& \frac{1}{2}h_{max}^{p}\max_{\uS}f
\int_{B(y, \delta)}|\langle x,x_0 \rangle|^pdx\nonumber\\&\geq&
\frac{1}{2}h_{max}^{p}\max_{\uS}f \min_{z\in \uS}\int_{B(y,
\delta)}|\langle x,z \rangle|^pdx\nonumber\\&\geq& C h_{max}^{p}.
\end{eqnarray}
Using the equation \eqref{Lp-A}, it is straightforward to see that
\begin{eqnarray}\label{0-5}
\int_{\uS}h^p(x) f(x)dx&=&\int_{\uS}\rho^q(u)du\nonumber\\& \leq&
|\uS| (\max_{\uS}\rho)^q\nonumber\\&=&|\uS| h^{q}_{max},
\end{eqnarray}
where we use \eqref{G-2} to get the last equality and $|\uS|$ is the
volume of $\uS$. Thus, combining \eqref{0-3}, \eqref{0-5} and the fact that $p>q$, we
obtain
\begin{eqnarray*}
h_{max}\leq C.
\end{eqnarray*}
Thus, we get the upper bound of $h$. The gradient estimate follows
from the upper bound of $h$, \eqref{G-1} and \eqref{G-2}
consequently.
\end{proof}

\subsection{$C^2$ estimate}

First, we give some notations. For a $(0, 2)$ tensor field
$b=\{b_{ij}\}$ on $\uS$, the coordinate expression of its covariant
derivative $\nabla b$ and the second covariant derivative
$\nabla^2b$ are denoted by
\begin{eqnarray*}
\nabla b=(b_{ij; k}), \quad \nabla^2 b=(b_{ij; kl}).
\end{eqnarray*}
However, the coordinate expression of the covariant differentiation
will be denoted by indices without semicolons, e.g. $$h_{i}, \quad
h_{ij} \quad \mbox{or} \quad h_{ijk}$$ for a function $h: \uS
\rightarrow \mathbb{R}$. To prove the $C^2$ estimate, we first
recall a simple algebraic inequality.

\begin{lemma}
For any $n-1$ real number $a_1, a_2, ..., a_{n-1}$ satisfying
\begin{eqnarray*}
\min\{a_1, a_2, ..., a_{n-1}\}\leq 0 \quad \mbox{and} \quad \max\{a_1, a_2, ..., a_{n-1}\}\geq 0,
\end{eqnarray*}
then we have the following inequality
\begin{eqnarray}\label{Alg}
\sum_{i=1}^{n-1}a^{2}_{i}\geq \frac{1}{n-2}(\sum_{i=1}^{n-1}a_{i})^2.
\end{eqnarray}
\end{lemma}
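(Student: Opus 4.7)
The plan is to single out one of the $a_i$ and apply the Cauchy--Schwarz (or power-mean) inequality to the remaining $n-2$ variables. The sign hypothesis on $\min$ and $\max$ gives us the freedom to choose which one to single out, and this freedom is exactly what lets the residual cross term be controlled.

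Set $s=\sum_{i=1}^{n-1}a_i$. If $s=0$ the inequality is trivial, so assume $s\neq 0$. The hypothesis guarantees that there exists an index $k$ with $a_k\leq 0$ and an index $k'$ with $a_{k'}\geq 0$; in particular we may choose (and, by relabeling, assume $k=1$) an index such that $s\, a_1\leq 0$. Applying Cauchy--Schwarz to the remaining $n-2$ terms yields
\begin{equation*}
\sum_{i=2}^{n-1}a_i^2 \;\geq\; \frac{1}{n-2}\Bigl(\sum_{i=2}^{n-1}a_i\Bigr)^{\!2} \;=\; \frac{(s-a_1)^2}{n-2}.
\end{equation*}

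Adding $a_1^2$ to both sides, it then suffices to check that
\begin{equation*}
(n-2)a_1^2 + (s-a_1)^2 \;\geq\; s^2,
\end{equation*}
which after expanding reduces to $(n-1)a_1^2 - 2s\,a_1 \geq 0$, i.e.\ $a_1\bigl[(n-1)a_1-2s\bigr]\geq 0$. Because of our choice $s\, a_1\leq 0$, the term $-2s\,a_1\geq 0$, and the nonnegative square $(n-1)a_1^2$ finishes the job.

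The only subtle point---and thus the ``main obstacle''---is the selection of which variable to peel off; a naive application of Cauchy--Schwarz to all $n-1$ terms would give the weaker constant $\tfrac{1}{n-1}$ instead of $\tfrac{1}{n-2}$. The sign hypothesis is used precisely here, to guarantee that one can always pick an $a_1$ whose sign is opposite to (or compatible with) that of $s$, making the linear cross term harmless.
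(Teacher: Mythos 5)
Your proof is correct, and it follows a genuinely different decomposition from the paper's. The paper orders the numbers, assumes (after a sign normalization) that the nonpositive block $a_1,\dots,a_s$ dominates the nonnegative block in the sense $|a_1+\cdots+a_s|\ge a_{s+1}+\cdots+a_{n-1}$, discards the nonnegative terms outright, and applies Cauchy--Schwarz to the $s\le n-2$ remaining terms, using that the partial sum then dominates the full sum in absolute value. You instead peel off a \emph{single} term $a_1$ chosen so that $s\,a_1\le 0$ (with $s$ the total sum), apply Cauchy--Schwarz to the other $n-2$ terms, and absorb the cross term $-2s\,a_1\ge 0$ by direct expansion. Both arguments hinge on Cauchy--Schwarz over at most $n-2$ terms, with the sign hypothesis providing the slack that upgrades the naive constant $\tfrac{1}{n-1}$ to $\tfrac{1}{n-2}$; your version buys a cleaner case analysis (no reordering, no comparison of which block dominates), and it also sidesteps a small imprecision in the paper's bookkeeping, where the normalization is stated as $2\le s\le n-2$ even though the dominating block may consist of a single element. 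The paper's version, in exchange, reduces to a one-line chain of inequalities once its normalization is accepted.
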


\begin{proof}
Without loss of generality, we assume
\begin{eqnarray*}
a_1\leq a_2\leq\cdot\cdot\cdot \leq a_s\leq 0\leq a_{s+1}\leq
\cdot\cdot\cdot\leq a_{n-1}, \quad 2\leq s\leq n-2,
\end{eqnarray*}
and
\begin{eqnarray*}
|a_1+a_2+\cdot\cdot\cdot+a_s|\geq a_{s+1}+\cdot\cdot\cdot+ a_{n-1}.
\end{eqnarray*}
Thus, using Cauchy-Schwartz inequality, we have
\begin{eqnarray*}
\sum_{i=1}^{n-1}a^{2}_{i}\geq\sum_{i=1}^{s}a^{2}_{i}\geq
\frac{1}{s}(\sum_{i=1}^{s}a_{i})^2\geq
\frac{1}{n-2}(\sum_{i=1}^{n-1}a_{i})^2.
\end{eqnarray*}
So, we complete the proof.
\end{proof}

\begin{lemma}\label{C2}
Let $h \in C^{4}(\uS)$ be a solution to \eqref{Lp-A}. Then there
exists a positive constant $C$ depending on $n$, $\max_{\uS} h$,
$\min_{\uS} h$, $\max_{\uS} |\nabla h|$, $A$ and $\max_{\uS} f$ such
that
\begin{eqnarray*}
|h|_{C^2(\uS)}\leq C
\end{eqnarray*}
provided $f \in C^2(\uS)$ satisfies either \textbf{Condition \ref{CI}} or
\textbf{Condition \ref{CII}}.
\end{lemma}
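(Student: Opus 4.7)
The plan is to derive a one-sided upper bound on the largest eigenvalue $\lambda_{\max}$ of the spherical ``curvature matrix'' $W_{ij} := h_{ij} + h\delta_{ij}$ via a maximum principle argument; combined with Lemma \ref{C0-1} and Lemma \ref{C0-2}, this yields the full $C^2$ bound. First I would rewrite the equation logarithmically as
$$
\log\det W = \log f + (p-1)\log h + \tfrac{n-q}{2}\log(|\nabla h|^2+h^2),
$$
and then introduce $g := f^{1/(n-2)}$ so that $\log f = (n-2)\log g$. This substitution is made precisely because Conditions \ref{CI} and \ref{CII} furnish controlled two-sided bounds on $g,\nabla g,\Delta g$ (or the sharper quadratic combination under Condition \ref{CII}), whereas the quantities $\log f$ and $\nabla^2 \log f$ themselves blow up as $f\to 0$.

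For the maximum principle I would apply it to the auxiliary function
$$
P(x) = \log\lambda_{\max}(W(x)) - \alpha\, h(x) + \tfrac{\beta}{2}|\nabla h(x)|^2
$$
for constants $\alpha,\beta>0$ to be chosen at the end. At a maximum point $x_0$ I would fix an orthonormal frame diagonalizing $W$ with $W_{11}=\lambda_{\max}$, so that $P = \log W_{11} - \alpha h + \tfrac{\beta}{2}|\nabla h|^2$ locally. Writing $F^{ij}=W^{ij}$ for the linearization of $\log\det W$, the identities $F^{ii}P_i=0$ and $F^{ii}P_{ii}\le 0$ combined with the standard Ricci commutators on $\uS$ (for $W_{11;ii}$ vs. $W_{ii;11}$) and with differentiating the logarithmic equation once and twice in the $e_1$ direction should produce an inequality of the form
$$
0 \geq - \frac{\sum_{i,j}W^{ii}W^{jj}|W_{ij;1}|^2}{W_{11}} + (\log f)_{11} + E(\alpha,\beta,h,\nabla h,W),
$$
where the error $E$ is bounded once the $C^0$ and $C^1$ controls of $h$ are used and $\alpha,\beta$ are taken sufficiently large to absorb the contributions from $(p-1)\log h$ and $\tfrac{n-q}{2}\log(|\nabla h|^2+h^2)$.

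The main obstacle is the term $(\log f)_{11}=(n-2)\bigl(g_{11}/g - g_1^2/g^2\bigr)$, which is only conditionally bounded below. The plan is to absorb $g_1^2/g^2$ into the concavity sum by first-order information: differentiating the equation gives $\sum_i W^{ii}W_{ii;1} = (\log\phi)_1$ with $\phi = f\, h^{p-1}(|\nabla h|^2+h^2)^{(n-q)/2}$, and the algebraic inequality \eqref{Alg} applied to $a_i = \sqrt{W^{ii}}\,W_{ii;1}$ (whose signs are mixed, because the trace is bounded while individual $W_{ii}$'s need not be) gives
$$
\sum_i W^{ii}(W_{ii;1})^2 \ge \tfrac{1}{n-2}\bigl(\sum_i W^{ii}W_{ii;1}\bigr)^2 = \tfrac{1}{n-2}(\log\phi)_1^{2},
$$
and this quadratic in $(\log\phi)_1$ is what cancels the quadratic $g_1^2/g^2$. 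The remaining linear term $g_{11}/g$ is then handled by Condition \ref{CI} via $\Delta g \ge -A$ (after summing in a suitable direction, feasible in low dimensions), or by Condition \ref{CII} directly, whose precise coefficient $\frac{3-q}{2-q}$ is tailored to the $\rho^{\,n-q}$ factor carried by $\phi$. Once this cancellation is engineered, choosing $\alpha,\beta$ large enough yields $W_{11}(x_0)\le C$, hence $\lambda_{\max}(W)\le C$ globally, which together with the a priori $C^0$, $C^1$ bounds gives $|h|_{C^2(\uS)}\le C$.
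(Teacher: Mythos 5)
There is a genuine gap at the heart of your cancellation mechanism. You invoke the algebraic inequality \eqref{Alg} for $a_i=\sqrt{W^{ii}}\,W_{ii;1}$, but (i) that choice of $a_i$ yields $\sum_i W^{ii}(W_{ii;1})^2\ge \frac{1}{n-2}\bigl(\sum_i \sqrt{W^{ii}}\,W_{ii;1}\bigr)^2$, not $\frac{1}{n-2}\bigl(\sum_i W^{ii}W_{ii;1}\bigr)^2=\frac1{n-2}(\log\phi)_1^2$ — the weights do not match, and the inequality you actually need is scaling-inconsistent (to pass from $\sum_i W^{ii}(W_{ii;1})^2$ to $(\sum_i W^{ii}W_{ii;1})^2$ Cauchy--Schwarz forces a factor $\sum_i W^{ii}$, which is exactly the quantity you cannot yet control); and (ii) \eqref{Alg} requires the $a_i$ to change sign, which you assert but do not prove. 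In the paper this sign condition is precisely the payoff of testing with the \emph{trace}: at the maximum of $H=\mathrm{tr}(\nabla^2h+hI)$ one has $\sum_k b_{kk;i}(x_0)=0$, so the numbers $b_{ii;p}$ (hence $b^{ii}b_{ii;p}$) have mixed signs. At a maximum of your $P=\log\lambda_{\max}(W)-\alpha h+\frac{\beta}{2}|\nabla h|^2$ the critical-point identity involves $W_{11;i}$ and lower-order terms, and gives no sign information whatsoever about the family $\{W_{ii;1}\}_i$.

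The second gap is the leftover second-order term in $f$. After your (unjustified) cancellation you are left with $(n-2)g_{11}/g$, $g=f^{1/(n-2)}$, but Condition \ref{CI} only bounds $\Delta g$ from below, not a single second derivative $g_{11}$; since the largest-eigenvalue test function differentiates the equation only in the $e_1$ direction, no Laplacian of $g$ ever appears, and "summing in a suitable direction" is not available. Moreover even $\Delta g/g\ge -A/g$ is unbounded as $f\to 0$, so a factor $f^{-1/(n-2)}$ must be absorbed by the operator term; the paper does this by writing the equation as $\det^{1/(n-2)}(\nabla^2h+hI)=\varphi^{1/(n-2)}$ (so the RHS contribution is exactly $(n-2)\,\Delta(\varphi^{1/(n-2)})/\varphi^{1/(n-2)}$, the quantity Conditions \ref{CI}/\ref{CII} are tailored to), and by the elementary bound $\sum_i b^{ii}\ge C(n)H^{1/(n-2)}(\det b)^{-1/(n-2)}$ with $\det b=\psi f$, which produces the matching $f^{-1/(n-2)}$ weight in the good term $H\sum_i b^{ii}$. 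Your logarithmic formulation destroys this homogeneity matching: $(\log f)_{11}$ carries $1/f$ and $1/f^2$ weights that neither Condition controls, and your good term $\alpha h\sum_i W^{ii}$ is not weighted so as to absorb them. To repair the argument you would essentially have to switch back to the trace test function and the $\frac{1}{n-2}$-power form of the equation, i.e.\ to the paper's proof.
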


\begin{proof}
Set $b_{ij}=h_{ij}+h\delta_{ij}$ and $b=\{b_{ij}\}$, let
\begin{equation}\label{ht}
H(x)=\mathrm{tr} b=(n-1)h(x)+\Delta h,
\end{equation}
which is clearly nonnegative since the matrix $b=\nabla^2h +hI$ is
nonnegative definite. Then, there exists a point $x_0 \in \uS$
such that
\begin{equation*}
H(x_0)=\max_{x \in \uS}H(x).
\end{equation*}
If $H(x_0)\leq 1$, then our result holds. So, we assume $H(x_0)\geq 1$.
By choosing a suitable orthonormal frame, we may assume
$\{h_{ij}(x_0)\}$ is diagonal. Then,
\begin{equation}\label{D-H-1}
0=\nabla_iH(x_0)=\sum_{k}b_{kk;i}
\end{equation}
and
\begin{equation}\label{D-H-2}
\nabla_i\nabla_iH(x_0)=\sum_{k}b_{kk; ii}\leq 0.
\end{equation}
Since $\{b_{ij}\}$ is nonnegative definite, its inverse matrix
$\{b^{ij}\}$ is also nonnegative definite. Thus, we have at $x_0$
\begin{eqnarray}\label{C2-1}
0&\geq& b^{ii}H_{ii}\nonumber\\&=&b^{ii}\sum_{k}b_{kk;
ii}\nonumber\\&=&b^{ii}\Big(\sum_{k}b_{ii; k
k}-(n-1)b_{ii}+\sum_{k}b_{kk}\Big)\nonumber\\&=&\sum_{k}b^{ii}b_{ii;
k k}-(n-1)^2+\sum_{i}b^{ii}H,
\end{eqnarray}
where we use the Ricci identity
\begin{eqnarray*}
b_{ii; kk}=b_{kk; ii}-b_{kk}+b_{ii}.
\end{eqnarray*}
We rewrite the equation \eqref{Lp-A} as
\begin{equation} \label{Lp-A-r}
\mathrm{det}^{\frac{1}{n-2}}(\nabla^2h +hI)=\varphi^{\frac{1}{n-2}},
\end{equation}
where
\begin{equation*}
\varphi=\frac{(|\nabla h|^2+h^2)^{\frac{n-q}{2}}}{h^{1-p}}f.
\end{equation*}
Differentiating \eqref{Lp-A-r} twice, we can obtain at $x_0$
\begin{eqnarray} \label{C2-2}
\sum_{k}b^{ij}b_{ij; kk}&=&b^{ik}b^{lj}\sum_{p}b_{ij; p}b_{kl; p}
-\frac{1}{n-2}b^{ij}b^{kl}\sum_{p}b_{ij; p}b_{kl; p}\nonumber\\&&
+\varphi^{-2}\Big[\varphi \Delta \varphi-\frac{n-3}{n-2}|\nabla
\varphi|^2\Big] \nonumber\\&=&b^{ii}b^{jj}\sum_{p}(b_{ij;
p})^2-\frac{1}{n-2}b^{ij}b^{kl}\sum_{p}b_{ij; p}b_{kl;
p}\nonumber\\&& +\varphi^{-2}\Big[\varphi \Delta
\varphi-\frac{n-3}{n-2}|\nabla \varphi|^2\Big]
\nonumber\\&\geq&\sum_{i}\sum_{p}(b^{ii}b_{ii; p})^2
-\frac{1}{n-2}\sum_{p}(\sum_{i}b^{ii}b_{ii; p})^2\nonumber\\&&
+\varphi^{-2}\Big[\varphi \Delta \varphi-\frac{n-3}{n-2}|\nabla
\varphi|^2\Big] \nonumber\\&\geq&\varphi^{-2}\Big[\varphi \Delta
\varphi-\frac{n-3}{n-2}|\nabla \varphi|^2\Big],
\end{eqnarray}
where we use the inequality \eqref{Alg} to get the last inequality by noticing that $\sum_{i}b_{ii; k}(x_0)=0$.
Substituting \eqref{C2-2} into \eqref{C2-1}, we arrive at $x_0$
\begin{eqnarray}\label{C2-1-1}
0&\geq&\varphi^{-2}\Big[\varphi \Delta \varphi-\frac{n-3}{n-2}|\nabla \varphi|^2\Big]-(n-1)^2+H\sum_{i}b^{ii}.
\end{eqnarray}
Set $\psi=\frac{(|\nabla h|^2+h^2)^{\frac{n-q}{2}}}{h^{1-p}}$, thus
$\varphi=\psi f$. It follows consequently
\begin{eqnarray}\label{C2-3}
&&\varphi^{-2}\Big[\varphi \Delta \varphi-\frac{n-3}{n-2}|\nabla \varphi|^2\Big]
\nonumber\\&=&f^{-2}\Big[f \Delta f-\frac{n-3}{n-2}|\nabla f|^2\Big]+\frac{2}{n-2}f^{-1}
\psi^{-1}\nabla f\nabla \psi\nonumber\\&&+\psi^{-2}\Big[\psi \Delta \psi-\frac{n-3}{n-2}|\nabla \psi|^2\Big].
\end{eqnarray}
Differentiating $\psi$ twice, we have
\begin{eqnarray}\label{C2-4}
\nabla_i\psi&=&(n-q)\frac{(|\nabla
h|^2+h^2)^{\frac{n-q-2}{2}}}{h^{1-p}}\sum_{k}h_{ki}h_k\nonumber\\&&+(n-q)\frac{(|\nabla
h|^2+h^2)^{\frac{n-q-2}{2}}}{h^{1-p}}hh_i+(p-1)\frac{(|\nabla
h|^2+h^2)^{\frac{n-q}{2}}}{h^{2-p}}h_i\nonumber\\&=&(n-q)\frac{(|\nabla
h|^2+h^2)^{\frac{n-q-2}{2}}}{h^{1-p}}h_{ii}h_i+C(h, h^{-1}, \nabla
h),
\end{eqnarray}
\begin{eqnarray}\label{C2-5}
|\nabla\psi|^2&=&(n-q)^2\frac{(|\nabla
h|^2+h^2)^{n-q-2}}{h^{2(1-p)}}\sum_{i}h_{ii}^{2}h^{2}_{i}\nonumber\\&&+C(h,
h^{-1}, \nabla h)*b,
\end{eqnarray}
and
\begin{eqnarray}\label{C2-6}
\psi\Delta\psi&=&(n-q)\frac{(|\nabla
h|^2+h^2)^{n-q-2}}{h^{2(1-p)}}\Big[(|\nabla
h|^2+h^2)h_{ii}^{2}+(n-q-2)\sum_{i}h_{ii}^{2}h^{2}_{i}\Big]\nonumber\\&&+C(h,
h^{-1}, \nabla h)*b\nonumber\\&\geq&(n-q)(n-q-1)\frac{(|\nabla
h|^2+h^2)^{n-q-2}}{h^{2(1-p)}}\sum_{i}h_{ii}^{2}h^{2}_{i}-C(h,h^{-1}, \nabla h)H,
\end{eqnarray}
where $C(h, h^{-1}, \nabla h)$ denotes some quantity depending on
$h, h^{-1}, \nabla h$ and may change from line to line.
$C(h, h^{-1}, \nabla h)*b$ denotes some quantity linear in $b$ with
coefficients depending on $h, h^{-1}, \nabla h$.

Combining \eqref{C2-5} with \eqref{C2-6}, we can arrive
\begin{eqnarray} \label{C2-7}
\psi^{-2}\Big[\psi \Delta \psi-\frac{n-3}{n-2}|\nabla
\psi|^2\Big]&\geq& \psi^{-2}\frac{2-q}{(n-q)(n-2)}|\nabla
\psi|^2\nonumber\\&&-C(h, h^{-1}, \nabla h) H.
\end{eqnarray}
Using Cauchy-Schwartz inequality, we obtain
\begin{eqnarray}\label{C2-8}
\frac{2}{n-2}f^{-1}
\psi^{-1}\nabla f\nabla \psi&\leq&\frac{n-q}{(n-2)(2-q)}f^{-2}
|\nabla f|^2\nonumber\\&&+\frac{2-q}{(n-q)(n-2)}\psi^{-2}|\nabla \psi|^2.
\end{eqnarray}
Substituting \eqref{C2-7} and \eqref{C2-8} into \eqref{C2-3}, it yields
\begin{eqnarray}\label{C2-9}
&&\varphi^{-2}\Big[\varphi \Delta \varphi-\frac{n-3}{n-2}|\nabla \varphi|^2\Big]
\nonumber\\&\geq&f^{-2}\Big[f \Delta f-\frac{3-q}{2-q}|\nabla f|^2\Big]-C(h, h^{-1}, \nabla h) H.
\end{eqnarray}
Putting \eqref{C2-9} into \eqref{C2-1-1}, we arrive at $x_0$ due to
\textbf{Condition \ref{CII}}
\begin{eqnarray}\label{C2-10}
0&\geq&-Af^{-\frac{1}{n-2}}-(n-1)^2+H\sum_{i}b^{ii}-C_(h,
h^{-1}, \nabla h)H.
\end{eqnarray}
Now we need to estimate $\sum_{i}b^{ii}$. Without loss of generality, we assume
\begin{eqnarray*}
b_{11}\leq b_{22}\leq \cdot\cdot\cdot \leq b_{n-1\ n-1}.
\end{eqnarray*}
It follows that $b_{n-1 \ n-1}\geq \frac{H}{n-1}$. Thus,
\begin{eqnarray}\label{b}
\sum_{i}b^{ii}&\geq& \sum_{i=1}^{n-2}b^{ii}\geq(n-2)\Big(\prod_{i=1}^{n-2}
b^{ii}\Big)^{\frac{1}{n-2}}\nonumber\\&=&
(n-2)\Big(\frac{b_{n-1 \ n-1}}{\mathrm{det}\ b}\Big)^{\frac{1}{n-2}}\nonumber\\&\geq&C(n)H^{\frac{1}{n-2}}
(\mathrm{det} \ b)^{-\frac{1}{n-2}}.
\end{eqnarray}
Plugging the above inequality into \eqref{C2-10}, we have
\begin{eqnarray*}
(n-1)^2f^{\frac{1}{n-2}}+A&\geq&C(n)H^{1+\frac{1}{n-2}}\Big[\frac{f}{\mathrm{det} \ b}\Big]^{\frac{1}{n-2}}-f^{\frac{1}{n-2}}C(h,
h^{-1}, \nabla h)H\nonumber\\&\geq&C(n, h,
h^{-1}, \nabla h)H^{1+\frac{1}{n-2}}-f^{\frac{1}{n-2}}C(h,
h^{-1}, \nabla h)H.
\end{eqnarray*}
Thus, we conclude from above
\begin{eqnarray*}
H\leq C(n, h,
h^{-1}, \nabla h).
\end{eqnarray*}
This gives an upper bound of $\max_{\mathbb{S}^n}H$.

Next, we will prove an upper bound of $\max_{\mathbb{S}^n}H$ when
$f$ satisfies\textbf{ Condition \ref{CI}}, it follows by
\eqref{C2-4} and \eqref{I-1}
\begin{eqnarray}\label{C2-12}
\frac{2}{n-2}f^{-1}
\psi^{-1}|\nabla f\nabla \psi|&\leq&2 (n-q)Af^{-\frac{1}{n-2}}H\\
\nonumber&& +2Af^{-\frac{1}{n-2}}C(h,
h^{-1}, \nabla h).
\end{eqnarray}
Substituting \eqref{C2-12} and \eqref{I-2} into \eqref{C2-3}, we
have
\begin{eqnarray*}\label{C2-13}
&&\varphi^{-2}\Big[\varphi \Delta \varphi-\frac{n-3}{n-2}|\nabla \varphi|^2\Big]
\nonumber\\&\geq&-\frac{A}{n-2}f^{-\frac{1}{n-2}}-2(n-q)Af^{-\frac{1}{n-2}}C(h,
h^{-1}, \nabla h)H\nonumber\\&&-2Af^{-\frac{1}{n-2}}C(h,
h^{-1}, \nabla h)-C(h,
h^{-1}, \nabla h)H.
\end{eqnarray*}
Substituting the above inequality into \eqref{C2-1-1} and using
$H(x_0)\geq 1$, we obtain
\begin{eqnarray*}
0&\geq&-C(h, h^{-1}, \nabla h)AH-f^{\frac{1}{n-2}}C(h, h^{-1},
\nabla h)H -C(h, h^{-1}, \nabla h)\\&&+f^{\frac{1}{n-2}}\sum_{i}b^{ii}H,
\end{eqnarray*}
which implies
\begin{eqnarray*}
f^{\frac{1}{n-2}}\sum_{i}b^{ii}&\leq&C(h, h^{-1}, \nabla h)A+C(h,
h^{-1}, \nabla h) (\max f)^{\frac{1}{n-2}}.
\end{eqnarray*}
Then, it follows from \eqref{b}
\begin{eqnarray*}
H^{\frac{1}{n-2}}\leq C(h, h^{-1}, \nabla h)A+C(h, h^{-1}, \nabla
h)(\max f)^{\frac{1}{n-2}}.
\end{eqnarray*}
We have thus complete the proof.
\end{proof}

\section{The proof of Theorem}

We first recall Lemma 2.2 in \cite{Guan-Li}.

\begin{lemma}\label{GL-1}
Let $f_1, f_2\in C^{1, 1}(\uS)$ be two nonnegative functions satisfying, for some positive constants $a, b, A$, that
\begin{eqnarray*}
af_i \Delta f_i-b|\nabla f_i|^2
\geq-Af_{i}^{2-\frac{1}{n-2}}, \quad \forall x \in \uS.
\end{eqnarray*}
Then $f=f_1+f_2$ satisfies
\begin{eqnarray*}
a f \Delta f-b|\nabla f|^2
\geq-2Af^{2-\frac{1}{n-2}}, \quad \forall x \in \uS.
\end{eqnarray*}
\end{lemma}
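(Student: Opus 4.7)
The plan is to expand
\begin{eqnarray*}
af\Delta f - b|\nabla f|^2 &=& \sum_{i=1}^{2}\big(af_i\Delta f_i - b|\nabla f_i|^2\big) \\
&& + \big(af_1\Delta f_2 + af_2\Delta f_1 - 2b\,\nabla f_1\cdot\nabla f_2\big),
\end{eqnarray*}
so that the two diagonal brackets are controlled directly by the hypothesis, each being $\geq -A f_i^{2-\frac{1}{n-2}}$. The whole matter therefore reduces to a lower bound for the cross-term bracket.

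At any point where $f_1,f_2>0$, the hypothesis rearranges to the pointwise inequality $\Delta f_i \geq \frac{b}{a}\frac{|\nabla f_i|^2}{f_i} - \frac{A}{a}\,f_i^{1-\frac{1}{n-2}}$. Multiplying by $af_j$ (with $j\neq i$, legal since $f_j>0$) and summing yields
\begin{eqnarray*}
af_1\Delta f_2 + af_2\Delta f_1 &\geq& b\Big(\tfrac{f_1}{f_2}|\nabla f_2|^2 + \tfrac{f_2}{f_1}|\nabla f_1|^2\Big)\\
&& - A\big(f_1 f_2^{1-\frac{1}{n-2}} + f_2 f_1^{1-\frac{1}{n-2}}\big).
\end{eqnarray*}
AM--GM bounds the gradient sum by $2|\nabla f_1||\nabla f_2|\geq 2\,\nabla f_1\cdot\nabla f_2$, so after subtracting $2b\,\nabla f_1\cdot\nabla f_2$ only the $A$-term survives. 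Combined with the diagonal estimate and regrouped via $f_i^{2-\frac{1}{n-2}} = f_i\cdot f_i^{1-\frac{1}{n-2}}$, this gives
\begin{eqnarray*}
af\Delta f - b|\nabla f|^2 \geq -A\,f\big(f_1^{1-\frac{1}{n-2}} + f_2^{1-\frac{1}{n-2}}\big).
\end{eqnarray*}

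To finish I would invoke the elementary bound $s^c + t^c \leq 2(s+t)^c$ for $s,t\geq 0$ and $c\in[0,1]$ (which is immediate from $(s+t)^c\geq s^c$ and $(s+t)^c\geq t^c$). With $c=1-\frac{1}{n-2}\in[0,1]$ for $n\geq 3$ this yields $f_1^{1-\frac{1}{n-2}}+f_2^{1-\frac{1}{n-2}}\leq 2 f^{1-\frac{1}{n-2}}$, and hence the desired $af\Delta f - b|\nabla f|^2\geq -2A f^{2-\frac{1}{n-2}}$.

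The main subtlety is the zero set of $f_1$ or $f_2$, where the division by $f_i$ used in the cross-term step is illegal. However, at any point where $f_i$ vanishes, the nonnegative $C^{1,1}$ function $f_i$ attains a global minimum, which forces $\nabla f_i = 0$ and $\Delta f_i\geq 0$ (in the almost-everywhere $C^{1,1}$ sense). The cross-term bracket is then manifestly nonnegative, so the full inequality collapses to the diagonal bound applied to the surviving nonzero summand. I expect this zero-set check, together with the AM--GM cancellation between the two gradient cross terms, to be the only non-routine part of the argument.
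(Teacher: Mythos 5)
Your argument is correct, but note that the paper itself offers no proof of this statement: Lemma \ref{GL-1} is simply recalled from Guan--Li (their Lemma 2.2), so there is no in-paper argument to compare against. Your proof is a sound, self-contained substitute and is in the spirit of Guan--Li's original computation: the bilinear expansion of $af\Delta f-b|\nabla f|^2$ into the two diagonal brackets plus the cross bracket, the bound
\begin{equation*}
\frac{f_1}{f_2}|\nabla f_2|^2+\frac{f_2}{f_1}|\nabla f_1|^2\geq 2|\nabla f_1||\nabla f_2|\geq 2\,\nabla f_1\cdot\nabla f_2
\end{equation*}
on $\{f_1>0,\ f_2>0\}$, and the regrouping
\begin{equation*}
f_1^{2-\frac{1}{n-2}}+f_2^{2-\frac{1}{n-2}}+f_1f_2^{1-\frac{1}{n-2}}+f_2f_1^{1-\frac{1}{n-2}}=f\left(f_1^{1-\frac{1}{n-2}}+f_2^{1-\frac{1}{n-2}}\right)\leq 2f^{2-\frac{1}{n-2}}
\end{equation*}
are all valid (the last step uses $1-\frac{1}{n-2}\in[0,1)$ for $n\geq 3$, and in fact concavity would give the sharper constant $2^{\frac{1}{n-2}}$ in place of $2$, though $2$ is all that is claimed). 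Two minor points of precision: for $f_i\in C^{1,1}$ the hypotheses and conclusion can only be meant pointwise a.e.\ (second derivatives exist a.e.\ by Rademacher), and your treatment of the zero set should be phrased accordingly --- on $\{f_i=0\}$ one has $\nabla f_i=0$ at every point (it is a minimum of a $C^1$ function), while $D^2f_i\geq 0$, hence $\Delta f_i\geq 0$, holds at every point of second-order differentiability of that set (indeed $D^2f_i=0$ a.e.\ there), which is exactly what is needed to make the cross bracket nonnegative and reduce to the diagonal estimate. With that reading, your proof is complete.
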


Now we begin to prove Theorem \ref{thm-1}.

\begin{proof}
Set $f_{\epsilon}=f+\epsilon$ for positive small $\epsilon$. It
follows from \cite{HZ.Adv.332-2018.57} that we can find an even
function $h_{\epsilon} \in C^{4, \alpha}$ satisfying \eqref{Lp-A}.
Note that $f$ is even and nonzero, so is $f_{\epsilon}$. We know from
Lemma \ref{GL-1} that $f_{\epsilon}$ satisfies either Condition
\ref{CI} or Condition \ref{CII}. Using Lemma \ref{C0-1}, Lemma
\ref{C0-2} and Lemma \ref{C2}, we have
$\{|h_{\epsilon}|_{C^2(\uS)}\}$ is uniformly bounded by some
independent of $\epsilon$. Let $h=\lim_{\epsilon\rightarrow
0}h_{\epsilon}$. Thus, $h$ is a generalized solution to \eqref{Lp-A}
and $h \in C^{1, 1}(\uS)$. Therefore Theorem \ref{thm-1} is proved.
\end{proof}

%%%%%%%%%%%%%%%%%%%%%%%%%%%%%%%%%%%%%%%%%%%
% \bibliographystyle{siam}
% \bibliography{article}

\end{document}